 \makeatletter \@addtoreset{equation}{section}
\newtheorem{thm}{Theorem}[section]
\newtheorem{hyp}[thm]{Hypotheses}{\rm}
{\rm}
\newtheorem{lemm}[thm]{Lemma}
\newtheorem{prop}[thm]{Proposition}
\newtheorem{defi}[thm]{Definition}
\newtheorem{rmk}[thm]{Remark}{\rm}
\newtheorem{example}[thm]{Example}
\newcommand{\R}{{\mathbb R}}
\newcommand{\N}{{\mathbb N}}
\newcommand{\Rd}{\mathbb R^d}
\newcommand{\A}{\mathcal{A}}
\newcommand{\supp}{{\rm{supp}}\,}
\newcommand{\bd}{\begin{defi}}
\newcommand{\ed}{\end{defi}}
\newcommand{\nnm}{\nonumber}
\newcommand{\be}{\begin{equation}}
\newcommand{\ee}{\end{equation}}
\newcommand{\barr}{\begin{array}}
\newcommand{\earr}{\end{array}}
\newcommand{\bmn}{\begin{eqnarray}}
\newcommand{\emn}{\end{eqnarray}}
\newcommand{\bnm}{\begin{eqnarray*}}
\newcommand{\enm}{\end{eqnarray*}}
\newcommand{\bln}{\begin{subequations}}
\newcommand{\eln}{\end{subequations}}
\newcommand{\ba}{\begin{align}}
\newcommand{\ea}{\end{align}}
\newcommand{\banm}{\begin{align*}}
\newcommand{\eanm}{\end{align*}}
\title[On the gradient estimates for evolution operators]{On the gradient estimates for evolution operators associated to Kolmogorov operators}
\author[L. Angiuli]{Luciana Angiuli}
\address{Dipartimento di Matematica e Informatica, Universit\`a degli Studi di Parma, Parco Area delle Scienze 53/A, I-43124 Parma, Italy.}
\email{luciana.angiuli@unipr.it}
\keywords{Nonautonomous second order elliptic
operators, unbounded coefficients, evolution operators, gradient estimates}
\subjclass[2000]{35K10, 35K15, 47B07}
\date{\today}
\begin{document}

\begin{abstract}
We determine sufficient conditions for the occurrence of a pointwise gradient estimate 
for the evolution operators associated to nonautonomous second order parabolic operators 
with (possibly) unbounded coefficients.
Moreover we exhibit a class of operators which satisfy our conditions.
\end{abstract}

\maketitle
\section{Introduction}
Let $I$ be an open right halfline and let $\{\A(t)\}_{t \in I}$ be a family of second order differential operators defined on smooth functions $\zeta$ by
\begin{equation}\label{a(t)}
(\mathcal{A}(t)\zeta)(x)= {\textrm{Tr}}(Q(t,x)D^2\zeta(x))+ \langle b(t,x), \nabla \zeta(x)\rangle,
\end{equation}
where the (possibly) unbounded coefficients $Q=[q_{ij}]_{i, j=1, \ldots, d}$ and $b = (b_1, \ldots, b_d)$ are defined in $I\times\Rd$. Let us consider the nonautonomous Cauchy problem
\begin{equation}\label{p_e_intro}
\left\{
\begin{array}{ll}
D_tu(t,x)={\mathcal{A}}(t)u(t,x),\quad\quad & (t,x)\in (s,+\infty)\times \Rd,\\[1mm]
u(s,x)= f(x),\quad\quad & x\in \Rd,
\end{array}\right.
\end{equation}
with $s\in I$ and $f\in C_b(\Rd)$.
\noindent
In the pioneering paper \cite{KunLorLun09Non}, under suitable assumptions on the coefficients $q_{ij}$ and $b_{i}$, the authors prove the
wellposedness of the problem \eqref{p_e_intro} in the space of continuous and bounded functions defined in $\Rd$. The unique bounded solution of \eqref{p_e_intro} can be written in terms of an evolution operator $G(t,s)$ associated to $\A(t)$, i.e.,
 $$u(t,x)=(G(t,s)f)(x),\quad\;\, t>s,\, x\in \Rd.$$
 Many properties of the solution of problem \eqref{p_e_intro} are investigated in \cite{KunLorLun09Non}; in particular, in \cite[Sect. 4]{KunLorLun09Non} some sufficient conditions on the coefficients are provided in order that the pointwise gradient estimates
\begin{equation}
\label{grad_estimate_intro}
|(\nabla_x G(t,s)f)(x)|^p\leq e^{c_p(t-s)}(G(t,s)|\nabla f|^p)(x),\quad\;\, t>s,\,x\in \Rd,
\end{equation}
hold for every $p>1$, $f \in C^1_b(\Rd)$ and some $c_p\in \R$.

\noindent
 The interest in this kind of estimates is due to the fact that they play a crucial role in the analysis of many qualitative properties of $G(t,s)$. Already in the autonomous case, they have been used to study the asymptotic behavior of the semigroup $T(t)$ generated by the operator in \eqref{a(t)} (when $Q(t,x)=Q(x)$ and $b(t,x)=b(x)$) in $L^p(\Rd,\mu)$, where $\mu$ is an invariant measure of $T(t)$, i.e., a Borel probability measure such that $\int_{\Rd} T(t)f d\mu=\int_{\Rd} f d\mu$, for every $f\in C_b(\Rd)$ and any $t>0$. In fact, this is the case also in the nonautonomous setting, where $T(t)$ is replaced by $G(t,s)$ and the single invariant measure $\mu$ is replaced by a family of Borel probability measures $\{\mu_t:\,\,t\in I\}$ called \emph{evolution system of measures}, satisfying
 $$\int_{\Rd} (G(t,s)f)(x) d\mu_t(x)=\int_{\Rd} f(x) d\mu_s(x),\quad\;\, t>s\in I,\,f\in C_b(\Rd).$$
\noindent
In the case of $T$-time periodic (unbounded) coefficients, it has been proved in \cite{LorLunZam10} that, if the coefficients are smooth enough and a weak dissipativity condition on the drift $b$ is assumed, then
\begin{equation}\label{asy_beh}
\lim_{t \to +\infty} \|G(t,s)f-m_s(f)\|_{L^p(\Rd,\mu_t)}=0,\quad\;\, s\in \R,\,f\in L^p(\Rd,\mu_s),
\end{equation}
for every $p\in[1,+\infty)$, where
$$m_s (f)=\int_{\Rd}f(y)d\mu_s(y)$$
and $\{\mu_t:\,\,t\in \R\}$ is the $T$-periodic evolution system of measures.

\noindent
The asymptotic behavior stated in \eqref{asy_beh} still holds also in a non-periodic setting provided that estimate
\eqref{grad_estimate_intro} holds for $p=1$ and some $c_1<0$ (see \cite{AngLorLun}).
Hence the problem is reduced to find conditions that imply
\begin{equation}
\label{p=1}
|(\nabla_x G(t,s)f)(x)|\leq e^{c_1(t-s)}(G(t,s)|\nabla f|)(x),\quad\;\, t>s\in I, \,x\in \Rd,
\end{equation}
for functions $f \in C^1_b(\Rd)$.
This is the case (see \cite[Thm. 4.5]{KunLorLun09Non})
 if the coefficients $q_{ij}$ ($i,j=1,\dots,d$) do not depend on $x$ and
\begin{equation}\label{drift}
\langle \nabla_x b(t,x)\xi,\xi\rangle \leq r_0 |\xi|^2,\quad\;\, \xi \in \Rd,\,(t,x)\in I\times \Rd,
\end{equation}
for some $r_0 \in \R$. In this case, estimate \eqref{p=1} is satisfied with $c_1=r_0$.
\noindent
Actually, the gradient estimate \eqref{p=1} gives sharper information than formula \eqref{asy_beh}.
When it is satisfied (as it has been proved in \cite[Cor. 5.4]{AngLorLun}), the exponential decay estimate
\begin{equation}\label{decay}
\|G(t,s)f-m_s(f)\|_{L^p(\Rd,\mu_t)}\leq C_p e^{c_1(t-s)}\|f\|_{L^p(\Rd,\mu_s)},\quad\;\, t>s\in I,
\end{equation}
holds for every $p>1,\, f\in L^p(\Rd,\mu_s)$ and some $C_p>0$.\\
 %Estimate
%\eqref{decay} is consequence of the pointwise gradient estimate \eqref{p=1}
%and of a comparison result between the asymptotic behavior of $\|G(t,s)f-m_s(f)\|_{L^p(\Rd,\mu_t)}$
%and that of $\|\nabla_x G(t,s)f\|_{L^p(\Rd,\mu_t)}$ as $t\to+\infty$ (see \cite[Thm. 5.3]{AngLorLun}).
The fact that estimate \eqref{p=1} has been proved only when the diffusion coefficients do not depend on $x$ is not surprising since, already in the autonomous case, Wang (\cite{wang})
 proved that the gradient
estimate $|\nabla T(t)f|\leq e^{ct}T(t)|\nabla f|$
%for the diffusion semigroup $T(t)$ generated by $L:= \sum_{i,j}q_{ij}D_{ij}+\sum_i b_i D_i$,
cannot hold
if the coefficients $q_{ij}$ do not satisfy the algebraic condition:
$$D_k q_{ij}(x)+D_i q_{kj}(x)+D_j q_{ki}(x)=0,\quad\;\, 1\leq i,j,k\leq d,\, x\in \Rd.$$

Estimate \eqref{p=1} has been also the key formula to establish many
other results on the summability improving properties of $G(t,s)$ in the $L^p$-spaces
related to the unique tight evolution system of measures $\{\mu_t:\,\,t\in I\}$.
In \cite{AngLorLun}, we use \eqref{p=1} in order to prove a Logarithmic-Sobolev
 inequality with respect to the tight system $\{\mu_t:\,\,t\in I\}$. Moreover, we establish a connection between the Logarithmic-Sobolev
 inequality and the hypercontractivity of the evolution operator $G(t,s)$ in the
 $L^p$-spaces related to the evolution system of measures $\{\mu_t:\,\,t\in I\}$.

In \cite{AngLor}, assuming \eqref{p=1}, we prove some Harnack type estimates and stronger results than hypercontractivity
for the evolution operator $G(t,s)$.

These results have been proved assuming
 that the diffusion coefficients do not depend on $x$ and formula \eqref{drift} is satisfied, so that \eqref{p=1} holds.

Because of the great importance of formula \eqref{p=1}, in this paper we provide two sufficient conditions on the coefficients $q_{ij}$ and $b_i$
in order that \eqref{p=1} is satisfied in the general case, and we show that one of them is also necessary.
More precisely we prove that, if the algebraic pointwise condition
\begin{equation}\label{intro_alge}
D_k q_{ij}(t,x)+D_i q_{kj}(t,x)+D_j q_{ik}(t,x)=0,\quad\;\,(t,x)\in I\times \Rd,
\end{equation}
is satisfied for every $i,j,k \in \{1,\dots,d\}$ and if the
dissipativity condition (which includes also the spatial derivatives of the diffusion coefficients $q_{ij}$)
$$\left(\frac{1}{2\eta(t,x)}\sum_{i,j=1}^d\langle \nabla_x q_{ij}(t,x),\xi\rangle^2\right)+\langle \nabla_x b(t,x)\xi,\xi\rangle\leq c_0|\xi|^2,
$$
holds for every $\xi\in \Rd$, $(t,x)\in I\times\Rd$ and some $c_0\in \R$, (see \eqref{ell} for the definition of $\eta$), then the gradient estimate \eqref{p=1} is satisfied. Moreover, as in the autonomous case, condition \eqref{intro_alge} is necessary for estimate \eqref{p=1}.

The proof of these facts relies on the connection between the gradient estimate \eqref{p=1} and the
 uniform Bakry type estimate
 \begin{equation}\label{BE_intro}
\langle \nabla f, \nabla_x (\mathcal{A}(s)f) \rangle\leq |\nabla f|(\mathcal{A}(s)|\nabla f|)+c|\nabla f|^2,\quad\;\, f\in C^\infty(\Rd),\,s\in I.
\end{equation}
Unfortunately, differently from the autonomous case (where they are equivalent, see \cite{Bak97OnS}), we are able to prove only that estimate \eqref{BE_intro}
is a necessary condition for the gradient estimate \eqref{p=1} hold, hence we prove the main result of the paper following a quite different approach than in \cite{wang}.

The paper is organized ad follows. In Section \ref{preliminary} we state our main assumptions,
we collect some known results on the evolution operator $G(t,s)$ and we prove a preliminary lemma.
Section \ref{main} contains a characterization of the occurrence of the gradient estimate \eqref{p=1}.
Finally, in Section \ref{comments} we give examples of nonautonomous operators to which the main result of the paper may be applied.

\subsection*{Notations}
Let $k\in [0,+\infty)$, we denote by $C^k_b(\Rd)$  the set of functions in $C^{[k]}(\Rd)$ which are bounded together with all their derivatives up to the $[k]$-th order and such that the $[k]$-th order derivatives are
$(k-[k])$-H\"older continuous in $\Rd$.
We use the subscript ``$c$'' instead of ``$b$''  for the subsets of the
above spaces consisting of functions  with compact support.

If $J \subset \R$ is an interval and $\alpha\in (0,1)$, $C^{k+\alpha/2,2k+\alpha}_{{\rm loc}}(J \times\Rd)$ ($k=0,1$)
denotes the set of functions $f:J\times \Rd\to \R$ such that the time derivatives up to the $k$-th order and the spatial derivatives up to the $2k$-th order are H\"older continuous with exponent $\alpha$, with respect to the parabolic distance, in any compact set of $J\times \Rd$.
Analogously we define the space of functions $C^{1+\alpha/2,3+\alpha}_{{\rm loc}}(J \times\Rd)$.

About partial derivatives, the notations $D_tf:=\frac{\partial f}{\partial t}$,
$D_if:=\frac{\partial f}{\partial x_i}$, $D_{ij}f:=\frac{\partial^2f}{\partial x_i\partial x_j}$ are extensively used.

About matrices and vectors, we denote by $\textrm{Tr}(Q)$, $\langle x,y\rangle$ and $|x|$ the trace of the square matrix
$Q$, the  inner product
of the vectors $x,y\in\Rd$ and the Euclidean norm of $x$, respectively.

The  ball in $\R^d$ centered at $x_0$ with  radius $r>0$ is denoted by $B(x_0,r)$.
When $x_0=0$, we simply write $B_r$ instead of $B(x_0,r)$.

\section{Assumptions, definitions and a review of some properties of $G(t,s)$}\label{preliminary}

First we state our standing assumptions and we collect some known results.

Let $I$ be an open right  halfline. For every $t\in I$, we consider
the linear second order differential
operator $ \mathcal{A}(t) $
defined on smooth functions $\zeta$ by
\begin{align*}
(\mathcal{A}(t)\zeta)(x)&=\sum_{i,j=1}^d q_{ij}(t,x)D_{ij}\zeta(x)+
\sum_{i=1}^d b_i(t,x)D_i\zeta(x)\\
&= \textrm{Tr}(Q(t,x)D^2\zeta(x))+ \langle b(t,x), \nabla \zeta(x)\rangle,\qquad\;\, x\in\R^d.
\end{align*}
The standing hypotheses on the data $Q=[q_{ij}]_{i, j=1, \ldots, d}$ and $b = (b_1, \ldots, b_d)$ are the following:

\begin{hyp}\label{hyp1}
\begin{enumerate}[\rm (i)]
\item
The coefficients $q_{ij}$ and $b_{i}$ $(i,j=1,\dots,d)$ and their first order spatial derivatives belong to $C^{\alpha/2,\alpha}_{\rm loc}(I\times \R^d)$  for
some $\alpha \in (0,1)$;
\item
the symmetric matrix $Q(t,x)=[q_{ij}(t,x)]_{i,j=1, \ldots, d}$
is uniformly elliptic, i.e., there exists a function $\eta:I\times \Rd\to \R$ such that $0<\eta_0=\inf_{I\times \Rd}\eta$ and
\begin{equation}
\label{ell}
\langle Q(t,x)\xi,\xi\rangle \ge \eta(t,x)|\xi|^2  ,\qquad\;\, \xi\in \Rd,\,\,(t,x)\in I\times\Rd;
\end{equation}
\item
for every bounded interval $J\subset I$ there exist a function $\varphi=\varphi_J\in C^2(\R^d)$ with positive values,  such that $\lim_{|x|\to +\infty}\varphi(x)=+\infty$, and a positive number $\gamma=\gamma_J$ such that
\begin{equation}
\label{Lyapunov}
(\mathcal{A}(t)\varphi)(x)\leq \gamma\,\varphi(x),  \quad (t,x)\in J\times \Rd.
\end{equation}
\end{enumerate}
\end{hyp}

Under these assumptions, for every $s\in I$ and $f\in C_b(\Rd)$, the problem
\begin{equation}\label{p_e}
\left\{
\begin{array}{ll}
D_tu(t,x)={\mathcal{A}}(t)u(t,x),\quad\quad & (t,x)\in (s,+\infty)\times \Rd,\\[1mm]
u(s,x)= f(x),\quad\quad & x\in \Rd,
\end{array}\right.
\end{equation}
admits a unique bounded classical solution, i.e., there exists a unique function
$u \in C_b([s,+\infty)\times \Rd)\cap C^{1,2}((s,+\infty)\times \Rd)$
that satisfies \eqref{p_e}. Moreover,
\begin{equation}\label{uniform_est}
\|u(t,\cdot)\|_{\infty}\leq \|f\|_{\infty},\quad\;\, t \geq s.
\end{equation}
We point out that condition (i) is not minimal for the well-posedness of problem \eqref{p_e}. In order to get existence and uniqueness of a solution to the problem \eqref{p_e}, besides Hypotheses \ref{hyp1}(ii)-(iii), it suffices to require only that the coefficients $q_{ij}$ and $b_i$ belong to $C^{\alpha/2,\alpha}_{\rm loc}(I\times \R^d)$.
The additional hypothesis on the regularity of the first-order spatial derivatives of the coefficients is used to prove that the solution is smoother.

The unique bounded solution $u$ to the problem \eqref{p_e} can be represented by means of a positive evolution operator $G(t,s)$ associated to $\A(t)$, by setting
$G(t,t):={\rm id}_{C_b(\Rd)}$ for every $t\in I$ and
$$(G(t,s)f)(x):= u(t,x),\quad\;\, (t,x)\in (s,+\infty)\times \Rd.$$

As already noticed, uniqueness of the solution of \eqref{p_e} is immediate consequence of Hypothesis \eqref{hyp1}(iii)
and is proved by means of the following maximum principle.
\begin{prop}\label{max_prin}
Let $s\in I$ and $T>s$. If $u \in C_b([s,T]\times \Rd)\cap C^{1,2}((s,T]\times \Rd)$ satisfies
\begin{equation*}
\left\{
\begin{array}{ll}
D_tu(t,x)-{\mathcal{A}}(t)u(t,x)\le 0,\quad\quad & (t,x)\in (s,T]\times \Rd,\\[1mm]
u(s,x)\le 0,\quad\quad & x\in \Rd,
\end{array}\right.
\end{equation*}
then $u(t,x)\le 0$ for every $(t,x)\in [s,T]\times \Rd$.
\end{prop}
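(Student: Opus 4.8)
The plan is to reduce the statement, via the Lyapunov function $\varphi$ from Hypothesis \ref{hyp1}(iii), to the classical interior maximum principle for parabolic operators on a region that is bounded in the space variable. Since $I$ is an open right halfline, $s\in I$ and $T>s$, the interval $J:=[s,T]$ is a bounded subinterval of $I$, so I may take $\varphi=\varphi_J\in C^2(\Rd)$ with $\varphi>0$, $\varphi(x)\to+\infty$ as $|x|\to+\infty$, and $\gamma=\gamma_J>0$ with $\mathcal{A}(t)\varphi\le\gamma\varphi$ on $J\times\Rd$. Fixing any $\lambda>\gamma$, I would introduce $p(t,x):=e^{\lambda(t-s)}(\varphi(x)+1)$ and check, using $\mathcal{A}(t)\varphi\le\gamma\varphi$, that $\mathcal{A}(t)$ annihilates constants, and that $\varphi>0$, that $p$ is a \emph{strict} supersolution: $D_tp-\mathcal{A}(t)p\ge e^{\lambda(t-s)}\big[(\lambda-\gamma)\varphi+\lambda\big]\ge\lambda e^{\lambda(t-s)}>0$ on $(s,T]\times\Rd$.

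Next, for $\varepsilon>0$ I would set $w_\varepsilon:=u-\varepsilon p$, which lies in $C_b([s,T]\times\Rd)\cap C^{1,2}((s,T]\times\Rd)$ and satisfies $D_tw_\varepsilon-\mathcal{A}(t)w_\varepsilon\le-\varepsilon\lambda e^{\lambda(t-s)}<0$ on $(s,T]\times\Rd$, together with $w_\varepsilon(s,\cdot)\le-\varepsilon(\varphi+1)<0$ and, by boundedness of $u$, $w_\varepsilon(t,x)\le\|u\|_\infty-\varepsilon(\varphi(x)+1)\to-\infty$ as $|x|\to+\infty$, uniformly in $t\in[s,T]$. I would then argue by contradiction: if $M:=\sup_{[s,T]\times\Rd}w_\varepsilon>0$, the coercivity in $x$ confines the supremum to a compact set, so it is attained at some $(t_0,x_0)$; the negativity at $t=s$ forces $t_0\in(s,T]$, where $w_\varepsilon$ is of class $C^{1,2}$. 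At $(t_0,x_0)$ one has $\nabla_xw_\varepsilon=0$, $D^2_xw_\varepsilon\le0$ — hence $\mathcal{A}(t_0)w_\varepsilon=\textrm{Tr}(Q(t_0,x_0)D^2_xw_\varepsilon)\le0$ by the ellipticity \eqref{ell} — and $D_tw_\varepsilon(t_0,x_0)\ge0$ (an equality if $t_0<T$, a one-sided inequality if $t_0=T$), so $D_tw_\varepsilon-\mathcal{A}(t_0)w_\varepsilon\ge0$ there, contradicting the strict inequality. Thus $w_\varepsilon\le0$, i.e. $u(t,x)\le\varepsilon e^{\lambda(T-s)}(\varphi(x)+1)$ on $[s,T]\times\Rd$, and letting $\varepsilon\to0^+$ for each fixed $(t,x)$ yields $u\le0$.

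The one genuinely essential point — and the reason Hypothesis \ref{hyp1}(iii) is invoked — is the unboundedness of $\Rd$: without a coercive $\varphi$, $\sup w_\varepsilon$ need not be attained and the classical argument fails. The passage from the weak differential inequality for $u$ to a strict one for $w_\varepsilon$, engineered by choosing $\lambda>\gamma$ and the harmless ``$+1$'' in $p$, is the standard device that makes equality at the maximum impossible; beyond that the argument is the textbook interior-maximum computation and should present no difficulty.
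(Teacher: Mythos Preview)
Your argument is correct and is precisely the standard Lyapunov-barrier proof; the paper itself does not spell it out but simply refers to \cite[Thm.~2.1]{KunLorLun09Non}, where the same method is used. One small slip: $w_\varepsilon=u-\varepsilon p$ is \emph{not} in $C_b([s,T]\times\Rd)$ since $p$ is unbounded, but this is irrelevant to your reasoning, which only uses continuity of $w_\varepsilon$ on $[s,T]\times\Rd$, the $C^{1,2}$ regularity on $(s,T]\times\Rd$, and the fact that $w_\varepsilon(t,x)\to-\infty$ as $|x|\to\infty$ uniformly in $t$.
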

\begin{proof}
See \cite[Thm. 2.1]{KunLorLun09Non} and the reference therein.
\end{proof}

The next lemma provides a regularity result when the initial datum $f$ is smooth enough.

\begin{lemm}\label{smoothdatum}
If $f\in C_c^{3+\alpha}(\Rd)$, then the solution $u$ to the problem \eqref{p_e} belongs to $C^{1+\alpha/2,3+\alpha}_{\rm loc}([s,+\infty)\times \Rd)$.
\end{lemm}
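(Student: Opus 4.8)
The plan is to establish the interior Schauder regularity of the bounded solution $u$ by a bootstrap argument, exploiting the improved regularity assumption Hypothesis \ref{hyp1}(i) on the first-order spatial derivatives of the coefficients. First I would recall that under the weaker assumption $q_{ij},b_i\in C^{\alpha/2,\alpha}_{\rm loc}(I\times\Rd)$ alone, the classical interior parabolic Schauder estimates (see e.g. Ladyzhenskaya--Solonnikov--Ural'tseva, or Lunardi) give $u\in C^{1+\alpha/2,2+\alpha}_{\rm loc}((s,+\infty)\times\Rd)$; moreover, since $f\in C^{3+\alpha}_c(\Rd)\subset C^{2+\alpha}_b(\Rd)$, the regularity is in fact up to the initial time, i.e. $u\in C^{1+\alpha/2,2+\alpha}_{\rm loc}([s,+\infty)\times\Rd)$, with matching compatibility at $t=s$ because $f$ is smooth and $\A(s)f$ is continuous. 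This is the starting point of the bootstrap.

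Next I would differentiate the equation $D_tu=\A(t)u$ formally with respect to a spatial variable $x_k$. Setting $v:=D_ku$, one obtains that $v$ solves
\begin{equation*}
D_tv(t,x)=\sum_{i,j=1}^d q_{ij}(t,x)D_{ij}v(t,x)+\sum_{i=1}^d b_i(t,x)D_iv(t,x)+g_k(t,x),
\end{equation*}
where the inhomogeneity is
\begin{equation*}
g_k(t,x)=\sum_{i,j=1}^d D_kq_{ij}(t,x)\,D_{ij}u(t,x)+\sum_{i=1}^d D_kb_i(t,x)\,D_iu(t,x),
\end{equation*}
with initial datum $v(s,\cdot)=D_kf\in C^{2+\alpha}_c(\Rd)$. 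One must first justify that $u$ is indeed twice spatially differentiable enough for this to make sense and that $v=D_ku$ solves this equation in the classical sense; this can be done by the standard device of differentiating the integral (mild) formulation, or by a difference-quotient argument combined with the interior estimates already available. By Hypothesis \ref{hyp1}(i), the coefficients $D_kq_{ij}$ and $D_kb_i$ belong to $C^{\alpha/2,\alpha}_{\rm loc}(I\times\Rd)$, and from the first step $D_{ij}u,D_iu\in C^{\alpha/2,\alpha}_{\rm loc}([s,+\infty)\times\Rd)$; hence $g_k\in C^{\alpha/2,\alpha}_{\rm loc}([s,+\infty)\times\Rd)$. Applying the parabolic Schauder estimates to the equation for $v$ — whose principal coefficients $q_{ij}$ are $C^{\alpha/2,\alpha}_{\rm loc}$, whose lower-order coefficients $b_i$ are $C^{\alpha/2,\alpha}_{\rm loc}$, whose right-hand side $g_k$ is $C^{\alpha/2,\alpha}_{\rm loc}$, and whose initial datum $D_kf$ is $C^{2+\alpha}$ — yields $v=D_ku\in C^{1+\alpha/2,2+\alpha}_{\rm loc}([s,+\infty)\times\Rd)$. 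Since this holds for every $k=1,\dots,d$, we conclude $u\in C^{1+\alpha/2,3+\alpha}_{\rm loc}([s,+\infty)\times\Rd)$, possibly after noting that the $C^{1+\alpha/2}$-in-time bound on $D_ku$ together with the equation $D_tu=\A(t)u$ upgrades the time regularity of $u$ as required.

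The main obstacle I anticipate is not the Schauder machinery itself, which is standard, but rather rigorously justifying the differentiation step: that $u$ is smooth enough a priori to differentiate the PDE in space and that $D_ku$ genuinely solves the stated nonhomogeneous equation in a classical (or at least strong) sense on bounded cylinders. The cleanest route is to work on a bounded cylinder $(s,T)\times B_R$, represent $u$ via the interior Green/fundamental-solution representation or via the semigroup generated by a truncated, uniformly elliptic operator with bounded smooth coefficients, differentiate there where everything is licit, and then pass to the limit using the uniform interior estimates; a localization with cutoff functions in $x$ handles the unboundedness of the domain and of the coefficients. One should also take care that the compatibility condition at $t=s$ for the equation satisfied by $v$ holds, which it does since $D_kf\in C^{2+\alpha}_c(\Rd)$ and $g_k(s,\cdot)$ is continuous; this is what allows the regularity to reach up to the initial time rather than only on $(s,+\infty)\times\Rd$.
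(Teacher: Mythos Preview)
Your approach is correct but takes a genuinely different route from the paper's. The paper does not bootstrap regularity of the whole-space solution by differentiating the equation; instead, it exploits the compact support of $f$ to approximate $u$ by the solutions $u_n$ of Cauchy--Dirichlet problems on balls $B_n$ (with zero boundary data, possible since $\supp f\subset B_n$ for $n$ large), invokes classical results from Friedman which give directly $u_n\in C^{1+\alpha/2,3+\alpha}([s,+\infty)\times\overline{B}_n)$, uses the uniform interior Schauder estimates from Lady\v{z}henskaja--Solonnikov--Ural'tseva together with the sup-norm bound $\|u_n\|_\infty\le\|f\|_\infty$ to obtain $n$-independent bounds on compact sets, extracts a diagonal subsequence via Arzel\`a--Ascoli converging in $C^{1,3}_{\rm loc}$ to a limit in $C^{1+\alpha/2,3+\alpha}_{\rm loc}([s,+\infty)\times\Rd)$, and finally identifies this limit with $u$ by the maximum principle (Proposition~\ref{max_prin}).

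What your argument buys: it is intrinsic to the given solution $u$, avoids auxiliary boundary-value problems, and makes explicit which hypothesis drives the extra order of regularity (namely $D_kq_{ij},D_kb_i\in C^{\alpha/2,\alpha}_{\rm loc}$, which is precisely what puts $g_k$ in the right H\"older class). What the paper's argument buys: it completely bypasses the delicate justification you flag---on each bounded cylinder the classical theory delivers the full $C^{1+\alpha/2,3+\alpha}$ regularity in one stroke, so no differentiation of the PDE, no difference quotients, and no mild formulation are needed; the only analysis is the compactness/uniqueness passage to the limit. Your proposed remedy for the differentiation obstacle (localize to a bounded cylinder, work with a truncated uniformly elliptic operator) is already moving in the direction of the paper's method.
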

\begin{proof}
Assume that $f$ belongs to $C_c^{3+\alpha}(\Rd)$. Let $m$ be the smallest integer
such that ${\rm \supp}f\subset B_m$. For every $n>m$, we consider the Cauchy-Dirichlet problem
\begin{equation}\label{p_d_n}
\left\{
\begin{array}{ll}
D_tu(t,x)={\mathcal{A}}(t)u(t,x),\quad\quad & (t,x)\in (s,+\infty)\times B_n,\\[1mm]
u(s,x)= f(x),\quad\quad & x\in B_n,\\[1mm]
u(t,x)=0,\quad\quad & (t,x)\in (s,+\infty)\times \partial B_n.
\end{array}\right.
\end{equation}
From Hypothesis \ref{hyp1}(i) and the classical results in \cite[Thms. 3.3.7--3.5.12]{Fri64Par}, problem \eqref{p_d_n} admits a
 unique solution
$u_n \in C^{1+\alpha/2,3+\alpha}([s,+\infty)\times \overline{B}_n)$
 such that
\begin{equation}\label{norm_infty}
\|u_n(t,\cdot)\|_{\infty}\leq \|f\|_{\infty},\quad\:\, t\geq s.
\end{equation}
The local Schauder estimates (see \cite[Thm. IV.10.1]{LadSolUra68Lin}) and estimate \eqref{norm_infty} yield that, for every $k<n$, there exists a positive constant $c_k$, independent on $n$, such that
\begin{align*}
\| u_n\|_{C^{1+\alpha/2,3+\alpha}([s, s+k]\times B_k)}\leq  c_k \| f\|_{C^{3+\alpha}_b(\Rd)} .
\end{align*}
By the Arzel\`a-Ascoli theorem we deduce that there exists a subsequence $(u_n^k)$ of $(u_n)$ which converges in
$C^{1,3}([s,s+k]\times \overline{B}_k)$ to a function $u^k\in C^{1+\alpha/2, 3+\alpha}([s,s+k]\times \overline{B}_k)$,
which satisfies the equation $u^k_t=\A(t)u^k$ in $(s,s+k)\times B_k$. Moreover, $u^k(s,\cdot)= f$ in $B_k$. Since, without loss of generality, we can assume that $(u_n^{k+1})$ is a subsequence of $(u_n^{k})$ and hence $u^{k+1}=u^{k}$ in $(s,s+k)\times B_k$, we can define the function $u:[s,+\infty)\times\Rd\to\R$ by setting $u(t,x)=u^k(t,x)$ for every $(t,x)\in(s, s+k)\times B_k$ and every $k \in \N$. The function $u$ belongs to $C^{1+\alpha/2, 3+\alpha}_{\rm loc}([s,+\infty)\times\Rd)$, satisfies \eqref{norm_infty} and it is the unique solution of problem \eqref{p_e}, due to Proposition \ref{max_prin}.
\end{proof}

In the next proposition, following the ideas in \cite{Bak97OnS}, we establish a connection between the gradient estimate satisfied by $G(t,s)$  and the
Bakry type estimate \eqref{BE_intro} (introduced in the autonomous setting in \cite{Bak97OnS})
satisfied by the operator $\A(t)$. More precisely, we prove that the Bakry type estimate is a
necessary condition for the gradient estimate \eqref{grad_estimate_intro} hold.

\begin{prop}\label{lemma_notcorrect}
Assume that there exists $c\in \R$ such that, for every $f\in C^1_b(\Rd)$ and $I\ni s\leq t$,
\begin{equation}\label{GE}
|\nabla_x G(t,s)f| \leq e^{c(t-s)}G(t,s)|\nabla f|.
\end{equation}
Then, the estimate
\begin{equation}\label{BE}
\langle \nabla f, \nabla_x (\mathcal{A}(s)f) \rangle\leq |\nabla f|\mathcal{A}(s)|\nabla f|+c|\nabla f|^2
\end{equation}
holds for every $f\in C^3(\Rd)$ and $s\in I$.
\end{prop}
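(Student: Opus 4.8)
The plan is to extract the Bakry-type estimate \eqref{BE} from the gradient estimate \eqref{GE} by differentiating in $t$ at $t=s$. Fix $s\in I$ and $f\in C^3(\Rd)$; by a cutoff argument it suffices to treat $f\in C_c^{3+\alpha}(\Rd)$, since \eqref{BE} is a pointwise inequality and on a fixed ball any $C^3$ function agrees with a compactly supported one. For such $f$, Lemma \ref{smoothdatum} guarantees that $u(t,\cdot)=G(t,s)f$ is regular enough, $u\in C^{1+\alpha/2,3+\alpha}_{\rm loc}([s,+\infty)\times\Rd)$, so that both sides of \eqref{GE} are differentiable in $t$ at $t=s^{+}$, and $D_t u(s,\cdot)=\A(s)f$, $D_t\nabla_x u(s,\cdot)=\nabla_x(\A(s)f)$.

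The first step is to rewrite \eqref{GE} as $e^{-c(t-s)}|\nabla_x G(t,s)f|-G(t,s)|\nabla f|\le 0$, an expression that vanishes at $t=s$. Hence its right derivative at $t=s$ is $\le 0$. Computing this derivative termwise: from the first term we get $-c|\nabla f|+\frac{d}{dt}\big|_{t=s}|\nabla_x u(t,\cdot)|$, and since $|\nabla_x u(t,x)|^2$ is smooth in $t$ with positive value wherever $\nabla f\ne0$, the chain rule gives $\frac{d}{dt}\big|_{t=s}|\nabla_x u|=\frac{\langle\nabla f,\nabla_x(\A(s)f)\rangle}{|\nabla f|}$ at points where $\nabla f(x)\ne0$; from the second term we get $\frac{d}{dt}\big|_{t=s}G(t,s)|\nabla f|$, which, because $|\nabla f|$ need not be smooth, must be handled carefully — but on a neighbourhood of a point $x_0$ where $\nabla f(x_0)\ne0$ the function $|\nabla f|$ is $C^2$, and one can compare $G(t,s)|\nabla f|$ with $G(t,s)\psi$ for a smooth $\psi$ coinciding with $|\nabla f|$ near $x_0$, using positivity and the maximum principle (Proposition \ref{max_prin}) to control the error; this yields $\frac{d}{dt}\big|_{t=s}(G(t,s)|\nabla f|)(x_0)=(\A(s)|\nabla f|)(x_0)$. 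Combining, at every $x_0$ with $\nabla f(x_0)\ne0$,
\[
-c|\nabla f(x_0)|+\frac{\langle\nabla f(x_0),\nabla_x(\A(s)f)(x_0)\rangle}{|\nabla f(x_0)|}-(\A(s)|\nabla f|)(x_0)\le 0,
\]
which is precisely \eqref{BE} after multiplying through by $|\nabla f(x_0)|>0$.

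It remains to dispose of the points where $\nabla f(x_0)=0$. There \eqref{BE} reads $0\le|\nabla f(x_0)|(\A(s)|\nabla f|)(x_0)=0$ for the left side (since $\langle\nabla f,\cdot\rangle=0$), so the inequality is trivially an equality unless $(\A(s)|\nabla f|)(x_0)$ is genuinely negative; but $x_0$ is a minimum point of the nonnegative function $|\nabla f|$, hence $\nabla_x|\nabla f|(x_0)=0$ and $D^2|\nabla f|(x_0)\ge0$ in the sense of quadratic forms wherever $|\nabla f|$ is twice differentiable there, giving $(\A(s)|\nabla f|)(x_0)={\rm Tr}(Q(s,x_0)D^2|\nabla f|(x_0))\ge0$; in the general case one argues by approximating $x_0$ with points where $\nabla f\ne0$ and passing to the limit in the inequality already established, using continuity of all the quantities involved.

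I expect the main obstacle to be the rigorous justification of $\frac{d}{dt}\big|_{t=s^{+}}(G(t,s)|\nabla f|)=\A(s)|\nabla f|$ and of the interchange of the time-derivative with the gradient in the first term, precisely because $|\nabla f|$ is only Lipschitz (not $C^2$) at the critical points of $f$; the localization trick — working near a noncritical point where $|\nabla f|$ is smooth, combined with a maximum-principle comparison to bound the contribution of the region near the zero set of $\nabla f$, and finally a density/continuity argument to cover the zero set itself — is the technical heart of the proof.
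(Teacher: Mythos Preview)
Your approach is the paper's: differentiate \eqref{GE} at $t=s$ and handle the non-smooth term $G(t,s)|\nabla f|$ by comparison with a smooth surrogate. The one genuine imprecision is in that comparison step. You propose a smooth $\psi$ ``coinciding with $|\nabla f|$ near $x_0$'' and appeal to positivity and the maximum principle to control the error; but local coincidence alone gives nothing, since $G(t,s)$ is nonlocal and $(G(t,s)(|\nabla f|-\psi))(x_0)$ is not $o(t-s)$ in general. The paper's fix is to require the \emph{global} inequality $g\ge|\nabla f|$ together with $g=|\nabla f|$ near $x_0$ (explicitly $g=\psi[\theta|\nabla f|+(1-\theta)\|\nabla f\|_\infty]$ for suitable cutoffs); positivity then gives the one-sided bound $G(t,s)|\nabla f|\le G(t,s)g$, whence $|\nabla_x G(t,s)f|^2\le e^{2c(t-s)}(G(t,s)g)^2$ with equality at $t=s$, and differentiating this inequality between two functions that are smooth in $t$ (by Lemma~\ref{smoothdatum}) yields \eqref{BE} at $x_0$ directly. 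Working with squares rather than the modulus is a further simplification the paper adopts.

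Two smaller points. A cutoff of a $C^3$ function lies only in $C^3_c$, not $C^{3+\alpha}_c$, so your reduction step still needs an approximation; the paper proves \eqref{BE} first for $f\in C^\infty_c(\Rd)$ and then passes to $f\in C^3(\Rd)$ by a sequence converging pointwise together with all derivatives up to order three. And since the paper (and you) establish \eqref{BE} only at noncritical points, your separate discussion of the zero set of $\nabla f$ is unnecessary; at such points the left-hand side vanishes and the right-hand side is interpreted as zero.
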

\begin{proof}
It suffices to prove
\eqref{BE} at any $x_0\in \Rd$ such that $|\nabla f(x_0)|>0$.
Formula \eqref{GE} yields
\begin{equation}\label{der_est}
\frac{1}{t-s}\left(|\nabla_x G(t,s)f|^2-|\nabla f|^2\right)
\leq \frac{1}{t-s}\left(e^{2c(t-s)}\big(G(t,s)|\nabla f|)^2-|\nabla f|^2\right),
\end{equation}
for any $t>s\in I$. We notice that the left and the right hand sides of \eqref{der_est} represent, respectively,
the incremental ratio at $t=s$ of the functions
$t \mapsto |\nabla_x G(t,s)f|^2=:h_{1}(t)$ and $t \mapsto e^{2c(t-s)}(G(t,s)|\nabla f|)^2=:h_2(t)$.

We prove first \eqref{BE} for $f\in C^\infty_c(\Rd)$.
The smoothness of the coefficients $q_{ij}$ and $b_i$ and Lemma \ref{smoothdatum} yield that the first-order spatial derivatives of $G(t,s)f$ belong to $C^{1+\alpha/2,2+\alpha}_{{\rm loc}}((s,+\infty)\times \Rd)$, hence
$$D_t(\nabla_x G(t,s)f)=\nabla_x(D_t (G(t,s)f))=\nabla_x (\mathcal{A}(t)G(t,s)f)$$
and consequently
$$h_1'(t)=2 \langle \nabla_x (\A(t)G(t,s)f),\nabla_x G(t,s)f \rangle,\quad\;\, t>s.$$
Moreover, again the smoothness of $f$ and of the coefficients of $\A(t)$, together with Lemma \ref{smoothdatum}, imply that the functions $\nabla_x G(\cdot,s)f$ and
$\nabla_x (\A(\cdot)G(\cdot,s)f)$ are continuous in $[s,+\infty)\times \Rd$. Hence $h_1$ is differentiable also in $t=s$ and $h_1'(s)=2 \langle \nabla_x (\A(s)f),\nabla f \rangle$.
Let us observe that the derivative of the function $h_2$ is given by
\begin{equation*}\label{derivative}h'_2(t)= 2c h_2(t)+2e^{2c(t-s)}(G(t,s)|\nabla f|)(\A(t)G(t,s)|\nabla f|),\quad\;\, t>s.
\end{equation*}
Since the function $t \mapsto \A(t)G(t,s)|\nabla f|$ is not (necessarily) continuous up to $s$, we consider a function
\begin{equation}\label{g_function}
g\in C^\infty_c(\Rd),\quad g=|\nabla f|\,\, \textrm{in a neighborhood of}\,\, x_0 \,\,\,\,\textrm{and}\quad g \ge |\nabla f| \,\, \textrm{in}\,\, \Rd.
\end{equation}
In this case, $G(\cdot,s)g\in C^{1+\alpha/2,2+\alpha}_{{\rm loc}}([s,+\infty)\times \Rd)$ and $(\mathcal{A}(s)g)(x_0)=(\mathcal{A}(s)|\nabla f|)(x_0)$. From \eqref{GE}, \eqref{g_function} and the positivity of $G(t,s)$ we deduce that
\begin{equation}\label{est_g}
|\nabla_x G(t,s)f|^2\le e^{2c(t-s)}\big(G(t,s)g)^2,
\end{equation}
with equality at $t=s$.
Taking the derivatives with respect to $t$ at $t=s$ of both sides in \eqref{est_g}, we get
\begin{equation*}
2 \langle \nabla_x (\A(s)f),\nabla f \rangle \le 2\big(c g^2+g(\A(s)g)\big),
\end{equation*}
hence,
$$\langle \nabla_x (\A(s)f)(x_0),\nabla f(x_0) \rangle \le c |\nabla f(x_0)|^2+|\nabla f(x_0)|(\A(s)|\nabla f|)(x_0),\quad\; s\in I.$$
To conclude the proof in this case, we determine a function $g$ which satisfies \eqref{g_function}.
Let $r>0$ be such that $|\nabla f(y)|>0$ for $|y-x_0|\leq r$. Let us consider two functions $\theta,\psi \in C^\infty_c(\Rd)$ such that $\theta=1$ in $B(x_0,r/2)$, $\theta=0$ in $\Rd\setminus B(x_0,r)$ and $\psi=1$ in the support of $f$. Then, the function
$$g(y):=\psi(y)[\theta(y)|\nabla f(y)|+(1-\theta(y))\|\nabla f\|_{\infty}],\quad\;\, y\in \Rd,$$
satisfiess all the properties claimed in \eqref{g_function}.
By the arbitrariness of $x_0\in \Rd$ we get \eqref{BE} for any function $f \in C^\infty_c(\Rd)$.

Finally, if $f\in C^3(\Rd)$ we can consider a sequence of functions $f_n\in C^\infty_c(\Rd)$, which converges locally uniformly to $f$, and
the sequence of functions $\tilde{f_n}:=\theta_n f_n$, where $\theta_n$ is defined as follows
\begin{equation}
\label{thetan}
\theta_n(x)=\psi\left(\frac{|x|}{n}\right), \qquad\;\,x\in \R^d, \;\,n\in \N,
\end{equation}
and
$\psi \in C^\infty(\R)$ satisfies $\chi_{(-\infty,1]}\leq \psi \leq \chi_{(-\infty,2]}$.
Then, $\tilde{f_n}\in C^\infty_c(\Rd)$ for every $n \in \N$ and $(D^{|\alpha|}f_n)(x)$ converges to $(D^{|\alpha|}f)(x)$ as $n\to +\infty$ for every $x\in \Rd$ and $0\leq |\alpha|\leq 3$.
Hence, writing \eqref{BE} for $\tilde{f_n}$ and letting $n\to +\infty$ we get the claim.
\end{proof}

\section{Main theorem}\label{main}

This section is devoted to prove the main result of the paper. In the following theorem some sufficient
conditions in order that the pointwise gradient estimate \eqref{p=1} hold are given.

\begin{thm}\label{mainthm}
Assume that, for every $i,j,k=1,\dots,d$,
\begin{equation}\label{cond_algebrica}
D_k q_{ij}(t,x)+D_i q_{kj}(t,x)+D_j q_{ik}(t,x)=0,\quad\;\,(t,x)\in I\times \Rd,
\end{equation}
and that there exists  $c_0\in \R$ such that
\begin{equation}
\label{b}
\left(\frac{1}{2\eta(t,x)}\sum_{i,j=1}^d\langle \nabla_x q_{ij}(t,x),\xi\rangle^2\right)+\langle \nabla_x b(t,x)\xi,\xi\rangle\leq c_0|\xi|^2,
\end{equation}
for every $\xi\in \Rd$ and $(t,x)\in I\times\Rd$, where $\eta$ is the function defined in \eqref{ell}. Then, for every $f\in C^1_b(\Rd)$ and $I\ni s\leq t$,
\begin{equation}\label{c}
|(\nabla_x G(t,s)f)(x)| \leq e^{c_0(t-s)}(G(t,s)|\nabla f|)(x),\quad\;\, x\in \Rd.
\end{equation}
Conversely, assume that the gradient estimate \eqref{c} is satisfied for some $c_0\in \R$. Then \eqref{cond_algebrica} holds for every $t\in I$, $x\in \Rd$ and $i,j,k=1,\dots,d$.
\end{thm}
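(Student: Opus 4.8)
The plan is to prove the two implications by quite different means: the sufficiency by a Bernstein type argument applied directly to $v:=G(\cdot,s)f$ and combined with the maximum principle of Proposition \ref{max_prin}, and the necessity by feeding suitable quadratic test functions into the Bakry type inequality \eqref{BE}, which by Proposition \ref{lemma_notcorrect} becomes available as soon as \eqref{c} holds. For the sufficiency, a routine approximation (replace $f\in C^1_b(\Rd)$ by $\theta_nf_n$ as in the proof of Proposition \ref{lemma_notcorrect}, use the local uniform convergence of $G(\cdot,s)f_n$ and of its spatial gradient, and pass to the limit in \eqref{c}) reduces matters to $f\in C^{3+\alpha}_c(\Rd)$; for such $f$ Lemma \ref{smoothdatum} makes $v$ regular enough. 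Fix $\varepsilon>0$ and set $w_\varepsilon:=(|\nabla_xv|^2+\varepsilon)^{1/2}$. Differentiating $D_tv=\A(t)v$ and rearranging (this is routine) one obtains, at every point of $(s,+\infty)\times\Rd$, writing $\xi:=\nabla_xv$, $H:=D^2_xv$ and letting $B$ be the symmetric matrix with entries $B_{ij}:=\langle\nabla_xq_{ij},\xi\rangle$,
\[
w_\varepsilon\big(D_tw_\varepsilon-\A(t)w_\varepsilon\big)=\textrm{Tr}(BH)+\langle\nabla_xb\,\xi,\xi\rangle-\textrm{Tr}(QH^2)+\frac{1}{w_\varepsilon^2}\langle QH\xi,H\xi\rangle .
\]

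The heart of the matter is then the pointwise inequality: \emph{if $Q$ is symmetric with $Q\ge\eta I$, $H$ is symmetric, $\xi\neq0$ and $\langle B\xi,\xi\rangle=0$, then}
\[
(\star)\qquad \textrm{Tr}(BH)-\textrm{Tr}(QH^2)+\frac{1}{|\xi|^2}\langle QH\xi,H\xi\rangle\le\frac{1}{2\eta}\sum_{i,j}B_{ij}^2 .
\]
Since $\langle B\xi,\xi\rangle=\sum_{i,j,k}D_kq_{ij}\,\xi_i\xi_j\xi_k$, the requirement "$\langle B\xi,\xi\rangle=0$ for all $\xi$" is, by polarization, \emph{exactly} \eqref{cond_algebrica}. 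To prove $(\star)$ I would set $u:=\xi/|\xi|$, let $\Pi$ be the orthogonal projection of $\Rd$ onto $\{u\}^\perp$, and exploit that the left-hand side of $(\star)$ is invariant under $H\mapsto H+t\,u\otimes u$ (this is where $\langle Bu,u\rangle=0$ is used, $u\otimes u$ being $x\mapsto\langle x,u\rangle u$); hence it depends only on $H_0:=\Pi H\Pi$ and $w:=\Pi Hu$, and a direct computation shows it equals $\textrm{Tr}(BH_0)+2\langle Bu,w\rangle-\sum_k|Q^{1/2}(H_0e_k+w_ku)|^2$. Putting $Z:=Q^{1/2}(H_0+u\otimes w)$ this is of the form $\langle N,Z\rangle-\|Z\|^2$ (Frobenius inner product and norm) with $N=Q^{-1/2}(\Pi B\Pi+2\,u\otimes(Bu))$, so it is $\le\tfrac14\|N\|^2\le\tfrac1{4\eta}\|\Pi B\Pi+2\,u\otimes(Bu)\|^2$; and since $\langle Bu,u\rangle=0$ forces $\Pi Bu=Bu$ and makes the blocks $\Pi B\Pi$, $(Bu)\otimes u$, $u\otimes(Bu)$ of $B$ mutually orthogonal, one gets $\|\Pi B\Pi+2\,u\otimes(Bu)\|^2=\|\Pi B\Pi\|^2+4|Bu|^2\le 2\|\Pi B\Pi\|^2+4|Bu|^2=2\|B\|^2$, i.e. $(\star)$.

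Granting $(\star)$, replacing $w_\varepsilon^{-2}$ by $|\xi|^{-2}$ in the last term above (licit because $|\xi|^2\le w_\varepsilon^2$ and $\langle QH\xi,H\xi\rangle\ge0$) and invoking \eqref{b} gives $w_\varepsilon(D_tw_\varepsilon-\A(t)w_\varepsilon)\le c_0|\xi|^2$, whence $D_tw_\varepsilon-\A(t)w_\varepsilon\le c_0w_\varepsilon+|c_0|\sqrt\varepsilon$ on $(s,+\infty)\times\Rd$. To pass from this differential inequality to \eqref{c} one compares $w_\varepsilon$ with $e^{c_0(t-s)}(G(t,s)w_\varepsilon(s,\cdot))(x)$ plus a bounded correction of order $\sqrt\varepsilon$; because the coefficients are unbounded and $w_\varepsilon$ need not be globally bounded, the comparison is effected by Proposition \ref{max_prin} together with the now-standard localization that subtracts $\delta e^{(\gamma+|c_0|)(t-s)}\varphi$, with $\varphi$ the Lyapunov function of Hypothesis \ref{hyp1}(iii), and lets $\delta\downarrow0$; using $w_\varepsilon(s,\cdot)\le|\nabla f|+\sqrt\varepsilon$ and $G(t,s)\one=\one$, and finally letting $\varepsilon\downarrow0$, yields $|\nabla_xG(t,s)f|\le e^{c_0(t-s)}G(t,s)|\nabla f|$, i.e. \eqref{c}.

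For the converse, assume \eqref{c} holds with $c_0$, so that Proposition \ref{lemma_notcorrect} gives \eqref{BE} for every $f\in C^3(\Rd)$ and $s\in I$. Fix $s\in I$, $x_0\in\Rd$, $\xi\in\Rd\setminus\{0\}$, $\mu\in\R$, and apply \eqref{BE} to the polynomial $f(x):=\langle\xi,x-x_0\rangle+\tfrac{\mu}{2|\xi|^2}\langle\xi,x-x_0\rangle^2$, for which $\nabla f(x_0)=\xi$ and $D^2f(x_0)=\mu\,u\otimes u$ with $u:=\xi/|\xi|$. Expanding both sides at $x_0$ via the identity
\[
\langle\nabla f,\nabla_x(\A(s)f)\rangle-|\nabla f|\,\A(s)|\nabla f|=\textrm{Tr}(BH)-\textrm{Tr}(QH^2)+\frac{1}{|\xi|^2}\langle QH\xi,H\xi\rangle+\langle\nabla_xb\,\xi,\xi\rangle
\]
(valid where $|\nabla f|>0$, $H=D^2f$), and using that for $H=\mu\,u\otimes u$ one has $\textrm{Tr}(QH^2)=|\xi|^{-2}\langle QH\xi,H\xi\rangle=\mu^2\langle Qu,u\rangle$ and $\textrm{Tr}(BH)=\mu\langle Bu,u\rangle$, inequality \eqref{BE} at $x_0$ reduces to $\mu\langle Bu,u\rangle+\langle\nabla_xb(s,x_0)\xi,\xi\rangle\le c_0|\xi|^2$ for every $\mu\in\R$; letting $\mu\to\pm\infty$ forces $\langle Bu,u\rangle=0$, i.e. $\sum_{i,j,k}D_kq_{ij}(s,x_0)\,\xi_i\xi_j\xi_k=0$, and since $\xi$ and $(s,x_0)$ are arbitrary, polarization gives \eqref{cond_algebrica}. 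The step I expect to be the real obstacle is $(\star)$: it is the only place where \eqref{cond_algebrica} enters, and it is a constrained optimization of a quadratic form over the Hessian that is bounded above precisely because the algebraic condition kills the dangerous direction $H=\mu\,u\otimes u$; the maximum-principle/localization part, though technical because of the unbounded coefficients, is routine in this setting.
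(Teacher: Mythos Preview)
Your argument is correct and follows the same overall architecture as the paper—Bernstein method plus maximum principle for sufficiency, and the Bakry type inequality \eqref{BE} from Proposition~\ref{lemma_notcorrect} for necessity—but the execution of the two key steps is genuinely different.

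For the sufficiency, the paper also arrives at an expression equivalent to the right–hand side of your displayed identity for $w_\varepsilon(D_tw_\varepsilon-\A(t)w_\varepsilon)$, but it then \emph{freezes} the diffusion coefficients (replacing $q_{ij}$ by their first–order Taylor polynomial at the point under consideration), introduces auxiliary vectors $Q^j_i=\sum_kD_ku\,D_kq_{ij}$ and $R^k_i=D_ku\sum_{j,l}D_juD_luD_jq_{il}$, and reaches the bound through a chain of explicit index manipulations in which \eqref{cond_algebrica} is invoked several times. Your route isolates the purely linear–algebraic content as the pointwise inequality $(\star)$, observes that \eqref{cond_algebrica} is exactly the vanishing of $\langle B\xi,\xi\rangle$, exploits the resulting invariance under $H\mapsto H+t\,u\otimes u$ to reduce to $H_0=\Pi H\Pi$ and $w=\Pi Hu$, and finishes by the elementary bound $\langle N,Z\rangle_F-\|Z\|_F^2\le\tfrac14\|N\|_F^2$. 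This is cleaner and makes transparent \emph{why} the algebraic condition is the right one: it kills precisely the direction $u\otimes u$ along which the left side of $(\star)$ would otherwise be unbounded. One minor point: your worry that $w_\varepsilon$ might be unbounded is unnecessary here, since \eqref{b} implies $\langle\nabla_xb\,\xi,\xi\rangle\le c_0|\xi|^2$, which together with the results cited in the paper gives $\nabla_xG(t,s)f$ bounded on $[s,T]\times\Rd$; the straight comparison via Proposition~\ref{max_prin} then applies without the Lyapunov localization (though your localization is also valid).

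For the necessity, the paper treats the three cases $i=j=k$, $i\neq j$ with $k\in\{i,j\}$, and $i\neq j$ with $k\notin\{i,j\}$ separately, using $\cos(y_i-x_i)$ and two different quadratic test functions. Your single parametrized family $f(x)=\langle\xi,x-x_0\rangle+\tfrac{\mu}{2|\xi|^2}\langle\xi,x-x_0\rangle^2$, combined with the identity you state (which is correct) and polarization of the cubic form $\sum_{i,j,k}D_kq_{ij}\,\xi_i\xi_j\xi_k$, handles all cases at once; this is more economical and, again, shows that the algebraic condition is equivalent to the single scalar constraint $\langle Bu,u\rangle=0$ for all $u$.
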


\begin{proof}

We prove the first part of the statement by using a variant of the Bernstein method. Fix $s\in I$ and $\varepsilon>0$.
For every $f\in C^1_b(\Rd)$, set $u(t,x):=(G(t,s)f)(x)$ and define
$$w(t,x)=(|\nabla_x u(t,x)|^2+\varepsilon)^{1/2},\quad\, t \ge s,\,\,x \in \Rd.$$
By \cite[Thm. 3.10]{Fri64Par} and \cite[Thm. 4.1, Cor. 4.4]{KunLorLun09Non}, $w\in C_b([s,T]\times \Rd)\cap C^{1,2}((s,T)\times \Rd)$ for every $T>s$ and
a straightforward computation shows that
$$D_t w-\A(t)w=F,$$
where
\begin{align*}
F&= (|\nabla_x u|^2+\varepsilon)^{-1/2}\Big(\langle \nabla_x b \nabla_x u,\nabla _x u\rangle-\sum_{k=1}^d\langle Q\nabla_x D_ku,\nabla_x D_k u\rangle\Big),\\
&\quad\quad\quad\quad\quad+(|\nabla_x u|^2+\varepsilon)^{-1/2}\sum_{k=1}^d D_k u\cdot {\rm Tr}(D_k Q\cdot D^2_x u)\\
&\quad\quad\quad\quad\quad+(|\nabla_x u|^2+\varepsilon)^{-3/2}\langle Q D^2_x u\nabla_x u, D^2_x u \nabla_x u\rangle.
\end{align*}
First of all, let us observe that
\begin{align*}
F\leq (|\nabla_x u|^2&+\varepsilon)^{-1/2}\left(\langle \nabla_x b \nabla_x u,\nabla _x u\rangle-\sum_{k=1}^d\langle Q\nabla_x D_ku,\nabla_x D_k u\rangle\right.\\
&\quad+\left.\sum_{k=1}^d D_k u\cdot {\rm Tr}(D_k Q\cdot D^2_x u)+\left\langle Q D^2_x u\frac{\nabla_x u}{|\nabla_x u|}, D^2_x u \frac{\nabla_x u}{|\nabla_x u|}\right\rangle\right).
\end{align*}
Moreover,
\begin{align}\label{usef_est}&\left\langle Q D^2_x u\frac{\nabla_x u}{|\nabla_x u|}, D^2_x u \frac{\nabla_x u}{|\nabla_x u|}\right\rangle-\sum_{k=1}^d\langle Q\nabla_x D_ku,\nabla_x D_k u\rangle\nnm\\
= & \sum_{i,j=1}^d q_{ij}\left(\frac{\langle \nabla_x u, \nabla_x D_i u \rangle\langle \nabla_x u, \nabla_x D_j u \rangle}{|\nabla_x u|^2} -\langle \nabla_x D_i u,\nabla_x D_j u\rangle \right)\nnm\\
= & -\sum_{i,j=1}^d q_{ij}\langle P(\nabla_x D_i u),P(\nabla_x D_j u)\rangle\leq -\eta\sum_{i=1}^d|P(\nabla_x D_i u)|^2,
\end{align}
where $P$ denotes the projection
\begin{equation}\label{projection}
P(v)= v-\left\langle v, \frac{\nabla_x u}{|\nabla_x u|}\right\rangle\frac{\nabla_x u}{|\nabla_x u|}, \qquad\;\, v \in \Rd.
\end{equation}
Hence, we have
\begin{align}\label{def_I}
F\leq &\frac{1}{w}\left(\langle \nabla_x b \nabla_x u,\nabla _x u\rangle-\sum_{i,j=1}^d q_{ij}\langle P(\nabla_x D_i u),P(\nabla_x D_j u)\rangle\right.\nonumber\\
&\quad\quad\quad\quad\quad+\left.\sum_{k=1}^d D_k u\cdot {\rm Tr}(D_k Q\cdot D^2_x u)\right)=: \displaystyle{\frac{1}{w}\,\,I.}
\end{align}
The crucial point of the first part of the proof consists in proving that
\begin{equation}\label{aim}
I(t,x)\leq c_0 |\nabla_x u(t,x)|^2 ,
\end{equation}
for every $t>s$ and $x \in \Rd$, where $c_0$ is the constant in assumption \eqref{b}. Indeed, in this case we obtain $D_t w-\mathcal{A}(t)w\le c_0w$.
Since, on the other hand, the function
$$z(t,\cdot)= e^{c_0(t-s)}G(t,s)(|\nabla f|^2+\varepsilon)^{\frac{1}{2}},\qquad\;\, t>s,$$
satisfies $D_t z-\mathcal{A}(t)z= c_0 z$, we get
\begin{eqnarray*}
\left\{
\begin{array}{ll}
D_t(w-z)(t,x)-\left[({\mathcal{A}}(t)+c_0)(w-z)\right](t,x)\le 0,\quad\quad & (t,x)\in (s,+\infty)\times \Rd,\\[1mm]
(w-z)(s,x)=0,\quad\quad & x\in \Rd.
\end{array}\right.
\end{eqnarray*}
Thus, the maximum principle in Proposition \ref{max_prin} implies that $w \le z$. Letting $\varepsilon \to 0^+$ and using the continuity property of $G(t,s)$ that follows from estimate \eqref{uniform_est}, we get \eqref{c}.

Now, let us fix $x_0\in \Rd$ and $t>s$ and prove that $I(t,x_0)\leq c_0 |\nabla_x u(t,x_0)|^2$ . We point out that it is not restrictive, from now on, to assume that the coefficients $q_{ij}$ are linear functions. Indeed, if we denote by $\widetilde{I}$ the sum in brackets in formula \eqref{def_I} where the $q_{ij}$'s are replaced by the $\widetilde{q}_{ij}$'s, defined by $\widetilde{q}_{ij}(t,x)=q_{ij}(t,x_0)+\langle \nabla_x q_{ij}(t,x_0),x-x_0\rangle$, ($i,j=1,\dots,d$),  we notice that $\widetilde{I}(t,x_0)=I(t,x_0)$. Moreover $\widetilde{q}_{ij}$ and $b_i$ satisfy the assumptions \eqref{cond_algebrica} and \eqref{b} at $(t,x_0)$ with the same constant $c_0$, and this is enough to complete the proof.

\noindent
We have
\begin{equation}
\sum_{k=1}^d D_k u\cdot {\rm Tr}(D_k Q\cdot D^2_x u)= \sum_{j=1}^d\langle \nabla_x D_j u, Q^j \rangle,
\end{equation}
where, for every $j=1,\dots, d$ and $(t,x)\in I\times \Rd$, $Q^j(t,x)$ is the vector with components $Q^j_i(t,x)=\sum_{k=1}^d D_k u(t,x) D_k q_{ij}(t,x)$
for $1\le i\le d$. Taking into account the definition of $P$ in \eqref{projection}, we can write
$$\langle \nabla_x D_j u, Q^j \rangle= \langle P(\nabla_x D_j u),P( Q^j) \rangle+\frac{1}{|\nabla_x u|^2}\langle \nabla_x u, \nabla_x D_j u  \rangle\langle \nabla_x  u, Q^j \rangle.$$
Moreover, being
$$
\langle \nabla_x u, \nabla_x D_j u \rangle\langle \nabla_x  u, Q^j \rangle =\langle \nabla_x u, \nabla_x (D_j u\langle\nabla_x u,Q^j\rangle)\rangle-D_j u \langle \nabla_x u, \nabla_x\langle \nabla_x u,Q^j\rangle\rangle
$$
and $\sum_{j=1}^d D_j u\langle\nabla_x u,Q^j\rangle=0$ by the assumption \eqref{cond_algebrica}, we get
\begin{align*}
\sum_{j=1}^d \langle \nabla_x u, &\nabla_x D_j u \rangle\langle \nabla_x  u, Q^j \rangle
= -
\sum_{j=1}^d D_j u \langle \nabla_x u, \nabla_x\langle \nabla_x u,Q^j\rangle\rangle\\
& =-\sum_{i,j,k,l=1}^d D_j u D_k u(D_i u D_{kl} u D_l q_{ij}+D_l u D_{ik} u D_l q_{ij})\\
& = -\sum_{i,j,k,l=1}^d D_j u D_k u\left[D_i u D_{kl} u D_l q_{ij}+D_l u D_{ik} u (-D_i q_{lj}-D_j q_{il})\right]\\
& = \sum_{i,j,k,l=1}^d D_j u D_k u D_l u D_{ik} u D_j q_{il},
\end{align*}
where we have used the linearity of $q_{ij}$ and again assumption \eqref{cond_algebrica}. Then,
$$\sum_{j=1}^d \langle \nabla_x u,\nabla_x D_j u\rangle\langle \nabla_x  u, Q^j \rangle=\sum_{k=1}^d \langle R^k,\nabla_x D_k u \rangle,$$
where,  for every fixed $k=1, \dots, d$ and $(t,x)\in I\times \Rd$, $R^k(t,x)$ denotes the vector with components $R^k_i(t,x)=D_k u(t,x)\sum_{j,l=1}^dD_j u(t,x) D_l u(t,x) D_j q_{il}(t,x)$ for $1\le i\le d$.
Finally, since assumption \eqref{cond_algebrica} implies $\langle R^k,\nabla_x u\rangle=0$, we have
$$\sum_{j=1}^d \langle \nabla_x u ,\nabla_x D_j u\rangle\langle \nabla_x  u, Q^j \rangle=\sum_{k=1}^d \langle R^k,P(\nabla_x D_k u) \rangle.$$
Putting together all these results, we deduce
\begin{eqnarray*}
\sum_{k=1}^d D_k u\cdot {\rm Tr}(D_k Q\cdot D^2_x u)=\sum_{j=1}^d \Big\langle P(\nabla_x D_j u),P(Q^j)+\frac{|R^j|}{|\nabla_x u|^2} \Big\rangle.
\end{eqnarray*}
The Cauchy-Schwarz and the Young inequalities yield that
\begin{align}\label{est_mix}
\sum_{k=1}^d D_k u\cdot {\rm Tr}(D_k Q\cdot D^2_x u)&\leq\sum_{j=1}^d |P(\nabla_x D_j u)|\left(|P(Q^j)|+\frac{|R^j|}{|\nabla_x u|^2}\right)\nnm\\
& \leq \sum_{j=1}^d |P(\nabla_x D_j u)|\left(2|P(Q^j)|^2+2\frac{|R^j|^2}{|\nabla_x u|^4}\right)^{\frac{1}{2}}\nnm\\
& \leq \left(\sum_{j=1}^d |P(\nabla_x D_j u)|^2\right)^{\frac{1}{2}}\left[2\sum_{j=1}^d\left(|P(Q^j)|^2+\frac{|R^j|^2}{|\nabla_x u|^4}\right)\right]^{\frac{1}{2}}\nnm\\
& \leq \varepsilon \sum_{j=1}^d |P(\nabla_x D_j u)|^2+\frac{1}{2\varepsilon}\sum_{j=1}^d\left(|P(Q^j)|^2+\frac{|R^j|^2}{|\nabla_x u|^4}\right).
\end{align}
Choosing $\varepsilon=\eta(t,x_0)$ in \eqref{est_mix} and using \eqref{usef_est}, we get
\begin{align*}
I(t,x_0) \le \frac{1}{2\eta(t,x_0)}\sum_{j=1}^d\left(|P(Q^j)|^2+\frac{|R^j|^2}{|\nabla_x u|^4}\right)+\langle \nabla_x b \nabla_x u,\nabla _x u\rangle.
\end{align*}
Now, since
$$|P(Q^j)|^2= |Q^j|^2-\left\langle Q^j,\frac{\nabla_x u}{|\nabla_x u|}\right\rangle^2$$ and
$$\sum_{j=1}^d |R^{j}|^2=|\nabla_x u|^2\sum_{j=1}^d\langle Q^j, \nabla_x u\rangle^2,$$
we conclude that
$$I(t,x_0)\leq \frac{1}{2\eta(t,x_0)}\sum_{j=1}^d |Q^j|^2+\langle \nabla_x b \nabla_x u,\nabla _x u\rangle.$$
Finally, being
$$\sum_{j=1}^d |Q^j|^2= \sum_{i,j=1}^d  (Q^j_i)^2=\sum_{i,j=1}^d\left(\sum_{k=1}^d D_k u D_k q_{ij}\right)^2=\sum_{i,j=1}^d\langle \nabla_x u,\nabla_x q_{ij} \rangle^2, $$
by assumption \eqref{b}, we deduce that $I(t,x_0)\leq c_0 |\nabla_x u(t,x_0)|^2$ as claimed.

The second part of the statement can be obtained arguing as in \cite[Thm. 1.1(1)]{wang}
but, for the readers convenience, we give a sketch of the proof.

Let us assume that estimate \eqref{c} holds for some $c_0\in \R$. Then, Proposition \ref{lemma_notcorrect}
implies that estimate \eqref{BE} is satisfied too. We show how, throughout a suitable choice of smooth functions $f$, formula \eqref{BE} implies \eqref{cond_algebrica} in the three cases, respectively $i=j=k$, $i\neq j$ with $k\in\{i,j\}$ and $i\neq j$ with $k\notin\{i,j\}$.

Fix $t\in I$, $x\in \Rd$; let us assume that $i=j=k$ and consider the function $f$ defined by $f(y)=\cos(y_i-x_i)$ for any $y\in \Rd$;
from \eqref{BE}, for every $t\in I$, $y\in \Rd$ and $\varepsilon>0$ small enough, we get
\begin{equation}\label{first}
\left\{\begin{array}{ll}
D_i q_{ii}(t,y)\leq (c_0-D_i b_i(t,y))\tan (y_i-x_i),\quad\;\, &y_i-x_i\in (0,\varepsilon),\\[2mm]
D_i q_{ii}(t,y)\geq (c_0-D_i b_i(t,y))\tan (y_i-x_i),\quad\;\, &y_i-x_i\in (-\varepsilon,0).
\end{array}\right.
\end{equation}
Hence, letting $y\to x$ in the inequalities \eqref{first} we get $D_iq_{ii}(t,x)=0$, so that \eqref{cond_algebrica} holds.

\noindent
In the second case, if, for instance, $i\neq j$ and $k=i$, we have to prove that $2D_i q_{ij}(t,x)+D_j q_{ii}(t,x)=0$.
For every $\varepsilon>0$, let us consider the function $f$ defined by $f(y)=[\varepsilon (y_j-x_j)+(y_i-x_i)]^2$ for any $y\in \Rd$. From \eqref{BE}, taking into account that, by the previous step,
$D_k q_{kk}(t,x)=0$ for every $(t,x)\in I\times\Rd$ and $k=1,\dots,d$, we get that, if $y_j-x_j>0$ and $y_i-x_i>0$, then
\begin{align}\label{first_second}
2D_i q_{ij}(t,y)+D_j q_{ii}(t,y)\leq &-\varepsilon(2D_j q_{ij}(t,y)+D_i q_{jj}(t,y))\nnm\\
&\quad +\frac{\varepsilon (y_j-x_j)+(y_i-x_i)}{\varepsilon} \psi_\varepsilon(t,y),
\end{align}
where
$$\psi_\varepsilon(t,y)=\left[c_0(1+\varepsilon^2)-(\varepsilon^2 D_jb_j+\varepsilon(D_jb_i+D_ib_j)+D_ib_i)\right].$$
Analogously, if $y_j-x_j<0$ and $y_i-x_i<0$, we get the inverse inequality of \eqref{first_second}.
Therefore, letting first $y\to x$  and then $\varepsilon \to 0^+$ in both of the obtained inequalities, we get $2D_i q_{ij}(t,x)+D_j q_{ii}(t,x)=0$.

\noindent
In the last case, if $i\neq j$ and $k\notin\{i,j\}$, we consider the function $f$ defined by $f(y)=[(y_k-x_k)+(y_i-x_i)+(y_j-x_j)]^2$ for any $y\in \Rd$.
Using again \eqref{BE}, the results obtained in the previous two cases and arguing as before
(distinguishing the two cases $y_l-x_l>0$ and $y_l-x_l<0$ ($l\in\{i,j,k\}$)), we deduce that
$D_k q_{ij}(t,x)+D_i q_{kj}(t,x)+D_j q_{ki}(t,x)=0$, and the proof is now complete.
\end{proof}

\section{Comments and examples}\label{comments}

In \cite[Thm. 4.5]{KunLorLun09Non}, estimate \eqref{GE} has been proved when the diffusion coefficients of $\A(t)$ do not depend on $x$.
In this section we provide concrete examples of nonautonomous operators like \eqref{a(t)} whose diffusion matrices depend also on $x$ and whose associated evolution operators $G(t,s)$ satisfy the gradient estimate \eqref{GE}.

First, in the following remark we point out that, in some simple case, the algebraic condition \eqref{cond_algebrica}
forces the diffusion matrix to be independent of $x$, coming back trivially to the case considered in  \cite{KunLorLun09Non}.

\begin{rmk}\rm{
\begin{enumerate}[\rm (i)]
\item Let us consider the nonautonomous operator \eqref{a(t)} whose diffusion matrix $Q(t,x)$ is of the form $q(x)H(t)$ where $q\in C^{1+\alpha}_{\rm{loc}}(\Rd)$ and $H(t)=[h_{ij}(t)]_{i,j=1,\dots,d}$ has entries $h_{ij}\in C^{\alpha/2}_{\rm{loc}}(I)$ for every $i,j=1,\dots,d$. If \eqref{cond_algebrica} is satisfied, then $q(x)=c$ for every $x\in \Rd$ and some $c\in \R$. To check this fact, it suffices to write \eqref{cond_algebrica} for $i=j=k\in\{1,\dots,d\}$.
\item Assume that the matrix $Q(t,x)=[q_{ij}(t,x)]_{i,j=1,\dots,d}$ in \eqref{a(t)} is such that $q_{ij}(t,x)=a_i(t,x)\delta_{ij}$ for every $i,j=1,\dots,d$. If \eqref{cond_algebrica} is assumed to hold, then $Q(t,x)=Q(t)$; indeed, if $i=j\neq k$ formula \eqref{cond_algebrica} yields $D_k a_i(t,x)=0$ for every $k\neq i$, moreover, if $i=j=k$ we also deduce that $D_i a_i(t,x)=0$.
\end{enumerate}}
\end{rmk}
\noindent
Now, we exhibit some class of nonautonomous operators whose diffusion coefficients depend on the space variable $x$ and
to which the result in Theorem \ref{mainthm} may be applied.
\begin{example}\label{ex_1}\rm{
Consider the class of nonautonomous elliptic operators defined on smooth functions $\zeta$ by
$$(\A(t)\zeta)(x)= {\rm{Tr}}(Q(t,x)D^2\zeta(x))+\langle b(t,x), \nabla \zeta(x)\rangle,\quad\;\, t\in I,\, x\in \R^3.$$
Here,
\begin{equation}\label{Q_matrix_11}
Q(t,x_1,x_2,x_3)=\begin{pmatrix}
a_1(t)+\psi(t)x_2^2 & -\psi(t)x_1x_2&0\\[2mm]
-\psi(t)x_1 x_2 & a_2(t)+\psi(t)x_1^2 & 0\\[2mm]
0 &0 & a_3(t)
\end{pmatrix}
\end{equation}
\vskip 1mm
and
\begin{equation*}
b(t,x)=-\gamma(t)x|x|^{2\beta},\quad\;\, \beta \in [1,+\infty).
\end{equation*}
The positive functions $a_i,\psi,\gamma$ satisfy the following conditions:
\begin{enumerate}[\rm (i)]
\item $a_i,\psi,\gamma\in C^{\alpha/2}_{\textrm{loc}}(I)$ for $i=1,2,3$;
\item $\inf_{t \in I} a_i(t)>0$ for $i=1,2,3$;
\item $\displaystyle{\gamma>\max\left\{\bar{a},\psi,2\psi^2/\bar{a}\right\}}$ where $\bar{a}(t):=\min_{i=1,2,3}\{a_i(t)\}$.
\end{enumerate}
%First of all we point out that \eqref{cond_algebrica} is equivalent to the following facts for the functions $q_{ij}$:
%\begin{enumerate}
%\item $q_{ii}$ cannot depend on $x_i$;
%\item $D_j q_{ii}=-2 D_i q_{ij},\quad\; i\neq j$;
%\item $D_i q_{kj}+D_j q_{ki}+D_k q_{ij}=0,\quad\;i\neq j\neq k$.
%\end{enumerate}
As it can be easily seen, $Q(t,x)$ is a positive definite matrix for any $(t,x)\in I\times \R^3$ and satisfies the condition \eqref{cond_algebrica}.
Moreover, the function
\begin{eqnarray*}
\varphi(x)=1+|x|^{2},\quad\;\, x\in \R^3,
\end{eqnarray*}
satisfies Hypothesis \ref{hyp1}(iii).
Indeed, for $t\in I$ and $x\in \R^3$ we have
%
%\begin{align*}
%(\mathcal{A}(t)\varphi)(x)=&  2\beta \big[ (2\beta-2) |x|^{2\beta -4}\langle Q(t,x)x,x\rangle + |x|^{2\beta -2}\textrm{Tr}(Q(t,x))\\
%&\quad\quad+|x|^{2\beta - 2}  \langle b(t,x),x\rangle \big]\\
%\leq &  2\beta \Big[ (2\beta-2)\max_{i=1,2,3}\{a_i(t)\} |x|^{2\beta -2}\\
%&\quad+|x|^{2\beta -2}(a_1(t)+a_2(t)+a_3(t)+\psi(t)|x|^2)-\gamma(t)|x|^{2(\alpha+\beta)}\Big].
%\end{align*}
\begin{align*}
(\mathcal{A}(t)\varphi)(x)=&  2 \big[ (\textrm{Tr}(Q(t,x))+  \langle b(t,x),x\rangle \big]\\
\leq &  2\big[a_1(t)+a_2(t)+a_3(t)+\psi(t)|x|^2-\gamma(t)|x|^{2(\beta+1)}\big].
\end{align*}
Hence,
$$\left(\frac{\A(t)\varphi}{\varphi}\right)(x)\longrightarrow -\infty,\quad\;\, \textrm{as}\,\, |x|\to +\infty, $$
uniformly with respect to $t\in I$, Thus, formula \eqref{Lyapunov} is satisfied.

Finally, we prove estimate \eqref{b}.
Let us observe that the matrix $Q$ is the sum of a semi-definite matrix and of a diagonal matrix whose diagonal
elements are respectively $a_1(t), a_2(t)$ and $a_3(t)$, hence the function $\eta$ in Hypothesis \ref{hyp1}(ii) is such that $\eta(t,x)\ge\bar{a}(t)$ for any $t\in I$ and $x\in \R^3$.
Moreover,
\begin{align*}
\sum_{i,j=1}^3\langle \nabla_x q_{ij}(t,x),\xi\rangle^2&=2\psi^2(t)[2x_2^2\xi_2^2+x_2^2\xi_1^2+x_1^2\xi_2^2+2x_1^2\xi_1^2+2x_1x_2\xi_1\xi_2]\\[-3mm]
&\leq 2\psi^2(t)[2x_2^2\xi_2^2+2x_2^2\xi_1^2+2x_1^2\xi_2^2+2x_1^2\xi_1^2]\\
& \leq 4 \psi^2(t) |x|^2|\xi|^2
\end{align*}
and
\begin{equation*}
\langle \nabla_x b(t,x)\xi,\xi \rangle= -\gamma(t)|x|^{2\beta}|\xi|^2-2\beta \gamma(t)|x|^{2(\beta-1)}\langle x,\xi\rangle^2,
\end{equation*}
for any $t\in I$ and $x,\xi\in \R^3$.
Therefore, we get
\begin{align*}
\frac{1}{2\eta(t,x)}\sum_{i,j=1}^3\langle \nabla_x q_{ij}(t,x),\xi\rangle^2+ \langle \nabla_x b(t,x)\xi,\xi \rangle  &\leq \left( \frac{2 \psi^2(t)}{\bar{a}(t)}|x|^2-\gamma(t)|x|^{2\beta}\right)|\xi|^2\\
&=: c(t,x)|\xi|^2.
\end{align*}
Since, as $|x|\to+\infty$, the function $c$ tends to $-\infty$ uniformly with respect to $t\in I$, we can conclude that
there exist a constant $c_0\in \R$ such that $c(t,x)\leq c_0$ for every $t\in I$ and $x\in \R^3$. Hence, \eqref{b} holds.
}\end{example}

\begin{rmk}\rm{
The Example \ref{ex_1} can be extended to the $d$-dimensional case. Indeed, we can consider a block diagonal matrix of the form
\begin{equation*}
Q(t,x)=\begin{pmatrix}
Q_{1}(t,x) &0           &\dots     &\dots&0\\
0          &\ddots        &\ddots   &\dots &\vdots\\
\vdots          &\ddots         &Q_{i}(t,x) &\ddots &\vdots    \\
\vdots          &\vdots        &\ddots   &\ddots    &0\\
0         &\dots           &\dots &0&Q_{k}(t,x)
\end{pmatrix}, \quad (t,x)\in I\times \Rd,
\end{equation*}
where each block $Q_{i}$ is either a three-dimensional matrix of the form of $Q$ in \eqref{Q_matrix_11}
or a two-dimensional matrix of the form
\begin{equation*}
Q(t,x,y)=\begin{pmatrix}
a_1(t)+\psi(t)y^2 & -\psi(t)xy\\[3mm]
-\psi(t)xy & a_2(t)+\psi(t)x^2
\end{pmatrix},
\end{equation*}
and the functions $a_1,a_2$ and $\psi$ satisfy conditions (i),(ii) and (iii) in Example \ref{ex_1}.
Moreover, for every $i=1,\dots,k$,
$$Q_i(t,x)= Q_{i}(t,x_{n_{i-1}+1},\dots, x_{n_{i-1}+n_i}),$$
where $n_0=0$ and $n_i\in\{2,3\}$ denotes the dimension of the block $Q_i$.
}
\end{rmk}


\begin{thebibliography}{99}

\bibitem{AngLorLun}
L. Angiuli, L. Lorenzi, A. Lunardi,
\newblock{\em Hypercontractivity and asymptotic behaviour in nonautonomous Kolmogorov equations},
\newblock{arXiv:1203.1280v1} (2012).

\bibitem{AngLor}
L. Angiuli, L. Lorenzi,
\newblock{\em On improvement of summability properties in nonautonomous Kolmogorov equations},
\newblock{arXiv.org/abs/1207.1293} (2012).

\bibitem{Bak97OnS}
D. Bakry,
\newblock{\em On Sobolev and logarithmic Sobolev inequalities for Markov semigroups},
\newblock{New trends in stochastic analysis}, World Sci. Publ., River Edge, NJ, (1997), 43-75.
		
\bibitem{Ber06Sur}
S. Bernstein,
\newblock{\em Sur la g\'en\'eralisation du probl\'eme de Dirichlet, I},
\newblock{Math. Ann.} {\bf{62}} (1906), 253-271.

\bibitem{Fri64Par}
A. Friedman,
\newblock {Partial differential equations of parabolic type}\,,
\newblock {Prentice Hall, Englewood Cliffs, N.J.}, 1964.

\bibitem{KunLorLun09Non}
M. Kunze, L. Lorenzi, A. Lunardi,
\newblock {\em Nonautonomous Kolmogorov parabolic equations
with unbounded coefficients},
\newblock {Trans. Amer. Math. Soc.} {\bf{362}} (2010), 169-198.

\bibitem{LadSolUra68Lin}
O.A. Lady$\check{\textrm{z}}$henskaja, V.A. Solonnikov, N.N. Ural'ceva,
\newblock{Linear and quasilinear equations of parabolic type}, Nauka, Moscow, 1967.
\newblock{English transl.: American Mathematical Society}, Providence, R.I. 1968.


\bibitem{LorLunZam10}
L. Lorenzi, A. Lunardi, A. Zamboni,
\newblock{\em Asymptotic behavior in
time periodic parabolic problems with unbounded coefficients},
\newblock{J. Differential Equations} {\bf{249}} (2010), 3377-3418.

%\bibitem{Lun95Ana}
%A. Lunardi,
%\newblock{Analytic semigroups and optimal regularity in parabolic problems}, {\em volume 16 of}
%\newblock{Progress in Nonlinear Differential Equations and their Applications.} Birkh\"auser, Basel, 1995.

%\bibitem{LadSolUra68Lin}
%O.A. Lady$\check{\textrm{z}}$henskaja, V.A. Solonnikov, N.N. Ural'ceva,
%\newblock{Linear and quasilinear equations of parabolic type},\, Nauka, Moscow, 1967
%\newblock{English transl.: American Mathematical Society},\, Providence, R.I. 1968.

\bibitem{wang}
F.-Y. Wang,
\newblock{\em A character of the gradient estimate for diffusion semigroups},
\newblock{Proc. Amer. Math. Soc.}  {\bf 133} (2004), 827-834.

\end{thebibliography}
\end{document}